\documentclass{amsart}
\usepackage{vmargin, amsmath,amsfonts,amssymb,amsthm,xcolor,amscd,latexsym,xspace,enumerate,bigints}
\usepackage[pagebackref, colorlinks=true,linkcolor=blue,citecolor=red]{hyperref}
\usepackage{hyperref}
\usepackage{pdfpages}
\usepackage{tikz-cd}
\usetikzlibrary{arrows}
\usepackage{graphicx}
\usepackage{mathtools}
\usepackage{stmaryrd}
\usepackage{stackrel}
\usepackage{array,booktabs}
\usepackage{booktabs,tabularx}
\usepackage{soul}
\sethlcolor{green}
\usepackage{wrapfig}
\usepackage{graphicx}
\usepackage{changepage}%for indents
\usepackage{bigints}
\usepackage[normalem]{ulem}

\graphicspath{ {./images/} }
\makeindex

\newtheorem{theorem}{Theorem}[section]

\newtheorem{lemma}[theorem]{Lemma}
\newtheorem{proposition}[theorem]{Proposition}
\theoremstyle{definition}
\newtheorem{definition}[theorem]{Definition}
\newtheorem{remark}[theorem]{Remark}

\newtheorem{example}[theorem]{Example}
\numberwithin{equation}{section}

\newcommand{\cgk}{C(G,\mathbb{K})}
\newcommand{\rgk}{R(G,\mathbb{K})}
\newcommand{\lgk}{L^2 (G,\mathbb{K})}
\newcommand{\lck}{L^2 (G,\mathbb{K})}
\newcommand{\cck}{C_C (G,\mathbb{K})}

\newcommand{\rhgk}{\hat{R}(G,\K)}

\newcommand{\C}{\mathbb{C}}
\newcommand{\R}{\mathbb{R}}
\newcommand{\N}{\mathbb{N}}
\newcommand{\Z}{\mathbb{Z}}
\newcommand{\T}{\mathbb{T}}
\newcommand{\K}{\mathbb{K}}

\newcommand*{\ix}[1]{\textit{#1}}
\newcommand{\n}{\hfill\\}

\newcommand*{\Tw}{\mathcal{T}}

\newcommand{\supp}{\text{supp}}
\newcommand{\Qp}{\mathbb{Q}_p}
\newcommand{\Zp}{\mathbb{Z}_p}

\begin{document}

\title[A Note on the Peter-Weyl Theorem]{A Note on the  Peter-Weyl Theorem}

\author[Y. Bavuma]{Yanga Bavuma}
\address{Yanga Bavuma\endgraf
Department of Mathematics and Applied Mathematics\endgraf
University of Cape Town\endgraf
Private Bag X1, Rondebosch, 7701, Cape Town, South Africa\endgraf
Email: \texttt{yanga.bavuma@uct.ac.za} -- \href{https://orcid.org/0000-0001-8991-6071}{ORCID: 0000-0001-8991-6071}}

\author[F.G. Russo]{Francesco G. Russo}
\address{Francesco G. Russo\endgraf
School of Science and Technology\endgraf
University of Camerino\endgraf
via Madonna delle Carceri 9, 62032, Camerino, Italy\endgraf
Email: \texttt{francesco.russo@unicam.it} -- \href{https://orcid.org/0000-0002-5889-783X}{ORCID: 0000-0002-5889-783X}}

\author[E. Stevenson]{Elizabeth Stevenson}
\address{Elizabeth Stevenson\endgraf
Department of Mathematics and Applied Mathematics\endgraf
University of Cape Town\endgraf
Private Bag X1, Rondebosch, 7701, Cape Town, South Africa\endgraf
Email: \texttt{stveli004@myuct.ac.za} -- \href{https://orcid.org/0009-0002-2821-0221}{ORCID: 0009-0002-2821-0221}}

\maketitle

\begin{abstract}We introduce some classical concepts in the representation theory of compact groups, in order to  use them for a new generalization of the Peter-Weyl Theorem. We mostly deal with functions on locally compact groups possessing large nontrivial compact open subgroups: in fact, we show that these functions can be approximated via others which are locally identical to the well known representative functions.\\ 
\\
\textit{Mathematics Subject Classification 2020:} Primary 22A10, 22D10; Secondary 42A20, 22E35.\\
\\
\textit{Keywords and Phrases:} Analysis on topological groups; Unitary representations of locally compact groups; Convergence of Fourier series; Analysis on $p$-adic Lie groups.
\end{abstract}

\maketitle

\section{Introduction}\label{1}

In 1807, Joseph Fourier found a way to approximate arbitrary periodic functions using simple linear combinations of sines and cosines. His ideas were a pioneering work for that time and led to what is known as the Fourier Theorem today. Then, 120 years later, in a German paper whose title may be translated to mean "The completeness of primitive representations of a closed continuous group", Fritz Peter and Hermann Weyl \cite{pw} published the celebrated "Peter-Weyl Theorem", a milestone in the representation theory of compact groups. This result is much like the Fourier Theorem as it is also concerned with approximating continuous functions using simpler functions. However, instead of real valued periodic functions, the Peter-Weyl Theorem approximates functions defined on certain structures of a geometric nature called \textit{compact groups}, which are groups that have been given with operations which are compatible with the topology under which they are compact and Hausdorff, see \cite{HewittRoss, hilgertneeb, hm, stroppel}. In other words, Peter-Weyl Theorem shows that it is possible to approximate any continuous function on a compact group with what are called \textit{representative functions}, which are much simpler functions than the original ones. In the specific case of  connected compact groups sharing the geometric properties of the circle of the usual Euclidean plane $\R^2$ (i.e. the \textit{torus} $\T$, see \cite{hm}), the Fourier Theorem becomes a corollary of Peter-Weyl's Theorem. The unpublished MSc thesis of the third author \cite{ste}  illustrates a full discussion around these two results in a self-contained way. In fact, these two important results (the Peter-Weyl Theorem and the Fourier Theorem) have been proved in many different ways in the literature. For example in \cite[\S 11.5]{lda} there is an argument involving Følner sets and Følner sequences on arbitrary compact abelian groups. Fourier's original argument (see \cite[Section 1.5]{enrique}) is also completely different and works only on the torus $\mathbb{T}$. Different authors \cite{FMS} follow an approach (always on $\mathbb{T}$) which uses testing polynomial families of possibly various natures (compare with \cite[4.26 Fourier Series]{rudin}). 

In order to discuss the Peter-Weyl Theorem we first need to introduce some general notions about topological algebra, as well as define representations and actions of compact groups on topological vector spaces. Briefly, what we mean by a representation is a group action of a topological group on a topological space, which is continuous with respect to both topologies. This is a standard concept which follows a classical line in topological group theory, see \cite{HewittRoss, hilgertneeb, hm, hofmann, stroppel}.

The idea of the generalization we will present here is that given a topological group with a nontrivial compact open subgroup, it is possible to partition the group into compact open cosets of this subgroup. And since each of these cosets is topologically isomorphic to a compact group we may in a way use the Peter-Weyl Theorem on each of the cosets to locally approximate sections of a given continuous function on the original group. These approximations are then glued together to approximate the full function. Of course, our main result specializes to the Peter-Weyl Theorem when the compact open subgroup is the group itself.  After the current Section \ref{1} which serves as introduction, Section \ref{2} is dedicated to stating the necessary preliminaries, covering topics such as representation theory, Haar measures and the original formulation of the Peter-Weyl Theorem. 
In Section \ref{3} we prove our main result which shows an approximation analogous to what is given by the representative functions $\rgk$ in the Peter-Weyl theorem  which may be produced for scalar valued functions defined on locally compact groups with compact open subgroups. An example of an applicable locally compact group is given by the $p$-adic rationals $\mathbb{Q}_p$, which contains the $p$-adic integers $\mathbb{Z}_p$ as a compact open subgroup. We will discuss this example as well.
The notation we use is standard and follows mostly \cite{HHR, HewittRoss, hilgertneeb,  hm, hofmann, stroppel}, which are classical textbooks in topological group theory and abstract harmonic analysis. 

\section{General notions of topological algebra}\label{2}

The present section recalls some unpublished material from the MSc thesis of the third author \cite{ste} written under the supervision of the first two authors. Throughout this section, we consider $\K$ to be equal to $\C$ (the field of complex numbers) or $\R$ (the field of real numbers). First we will discuss topological groups and representations. Throughout this paper we shall refer to a \textit{topological group} as a (Hausdorff) group with a topological structure on it. In order to match the algebraic structure and the topological structure in an appropriate way, we require that the group operation and the inversion operation be continuous with respect to the topology on the group, see \cite[Definition 1.1]{hofmann}.  If a topological group is compact under its topology, we say that it is a \textit{compact group}. More generally, a \textit{locally compact group} is a topological group that is locally compact under its topology; see \cite{HHR, hofmann, stroppel, willard}. Similarly, we may topologize the underlying set of a vector space instead of the underlying set of a group. When we do this we require a topology under which the main operations of the vector space are continuous. That is: the operations of addition, additive inversion, and scalar multiplication. For instance, following \cite[Chapter 2]{hofmann} suppose $E$ is a vector space on $\K$ with topology $\mathcal{T}_E$ and $\mathcal{T}_{\K}$ respectively. Let $\mathcal{T}_{E \times E}$ be the product topology of $E \times E$ and let $\mathcal{T}_{\K \times E}$ be the product topology on $\K \times E$. We say that $E$ is a \textit{topological vector space}, if the addition $(u_1,u_2) \in E \times E \mapsto u_1 + u_2 \in E$ is continuous w.r.t. $\mathcal{T}_{E \times E}$ and $\mathcal{T}_E$; the inversion $u \in E \mapsto -u \in E$ is continuous w.r.t. $\mathcal{T}_E$;
and the scalar multiplication $(c,u) \in \K \times E \mapsto c u \in E$ is continuous w.r.t. $\mathcal{T}_{\K \times E}$ and $\mathcal{T}_E$. Recall the notion of a group action: elements of a group can be seen to ``act" on elements of another set, producing again an element of the given set. For instance, by shifting or permuting its elements, we have examples of actions. This is a well known notion in topological group theory, see \cite{HHR, hilgertneeb, hofmann,  stroppel}.
When this holds, we may introduce the following concept:

\begin{definition}[$G$-Module, See \cite{hofmann}, Definition 2.1 (i)]\label{gmod}Let $G$ be a topological group with topology $\mathcal{T}_G$. Let $E$ be a topological vector space with topology $\mathcal{T}_E$. Let $(g,t)\in G\times E \mapsto gt\in E$ be a group action of $G$ on $E$.
We say $E$ is a \ix{$G$-module} with respect to the above action if
\begin{enumerate}
\item[{\rm (i).}] For all $g\in G$, the map $t\in E\mapsto gt\in E$ is continuous and linear w.r.t. $\mathcal{T}_E$, and
\item[{\rm (ii).}] For all $t\in E$, the map $g\in G\mapsto gt\in E$ is continuous w.r.t. $\mathcal{T}_G$ and $\mathcal{T}_E$.
\end{enumerate}
\end{definition}

The definition of a \textit{linear function} in the context of vector spaces is well known: essentially we consider a function $f$ from a vector space $X$ onto a vector space $Y$ (both over the field $\mathbb{K}$) and say that f is linear if $f({x}+{y})=f({x})+f({y})$
and $f(c{x})=cf({x})$ for all ${x}, {y} \in X$ and $c \in \mathbb{K}$.
It is more relevant to recall the following notion:

\begin{definition}[Strong Operator Topology and Representations, See \cite{hofmann}, Chapter 2]\label{defrepresentation} Let $E$ be a topological vector space with topology $\Tw_E$. Let $\Tw_{E^E}$ denote the product topology on $E^E$. The set of continuous linear functions $E\rightarrow E$ given by $\mathrm{Hom}(E,E)\subseteq E^E$ forms a topological vector space with respect to the induced topology of $\Tw_{E^E}$. This topological space is denoted by \ix{$\mathcal{L}_p(E)$} and its topology is called \textit{strong operator topology}.
Let $G$ be a topological group with topology $\Tw_G$. Consider a map $\pi: G\rightarrow \mathcal{L}_p(E)$. We say that $\pi$ is a \ix{representation} of $G$ on $E$ if
\begin{enumerate}
\item[{\rm (i)}.] $\pi$ is continuous w.r.t. $\Tw_G$ and the topology of $\mathcal{L}_p(E)$,
\item[{\rm (ii)}.] $\pi$ is a homomorphism of groups.
\end{enumerate}
\end{definition}

We can also summarize the above two conditions of Definition \ref{defrepresentation}, saying that $\pi$ is a morphism in the category of topological groups. When we deal with topological groups, we always look for those homomorphisms which preserve both the topological and algebraic structure. It turns out that the ideas of $G$-modules and representations are equivalent, where a given $G$-module may produce a representation by defining $\pi(g)(x)\coloneqq g\cdot x$.

An important example of a $G$-module is given by the topological vector space $$\cgk :=\{ f : G \to \K \mid f \ \mbox{is continuous}\},$$ consisting of all continuous functions from $G$ to $\K$ w.r.t. the topology on $G$ and the usual topology in $\K$. The group action of an element $g$ on a function $f:G\rightarrow\K$ is obtained by composing $f$ with the right multiplication map $h\in G\mapsto hg\in G$. We consider this action in more detail below.

\begin{example}\label{supnorm} Following \cite{HewittRoss, hilgertneeb, hofmann, stroppel}, if $G$ is a compact group, it turns out that $\cgk$ is a topological vector space under the topology induced by the \textit{norm of the supremum}
\[
||f||_\infty=\sup_{g\in G}f(g).
\]
There exists an action of $G$ on $\cgk$, given by
$$ (g,f)\in G\times\cgk\mapsto {}_gf \in \cgk,$$
where for each $g\in G$ we have \[_gf(x)\coloneqq f(hg) \ \ \mbox{for all} \ \  x\in G.\] It holds that $\cgk$ is a $G$-module in the present circumstances.
\end{example}

Note that $\cgk$ is both a topological vector space and a \textit{Banach algebra}, see \cite[Chapter 1]{hofmann}.   In fact, for any compact group $G$, one can see that $\cgk$ with the norm of the supremum is a Banach space and is, in fact, a Banach $G$-module.   Next, we must discuss the Haar measures, which we will use to measure the sizes of scalar-valued functions defined on locally compact groups. 

\begin{definition}[Support, $C_c(G,\K)$ and $C_c^+(G,\K)$, See \cite{stroppel}, Definition 12.1]\label{closedsupport}
We define the \textit{support} of $\varphi \in C(G,\mathbb{K})$, where $G$ is a topological group, as $$\supp (\varphi) := \overline{\{g \in G \ | \ \varphi(g) \neq 0\}}.$$ Then we define
 $C_c(G,\K)$ to be the collection of continuous maps from $G$ to $\K$ with compact support. 
 And $C_c^+(G,\K)$ is defined to be the collection of continuous maps $\varphi\in C_c(G,\K)$ where $\varphi(g)\geq 0$ for all $g\in G$.
\end{definition}

A \textit{Haar measure} is a positive, linear, invariant, nonzero map that takes scalar-valued, continuous, compactly supported functions on a topological group $G$ (that is, functions in $\cck$) to scalar values, see \cite[Definitions 12.3, 14.2]{stroppel}.
An unfortunate aspect of Haar measures is that a single locally compact group $G$ might have multiple Haar measures, which are all proportional to each other. For instance if $\mu$ is a Haar measure on $G$, then $2\mu$ and $3\mu$ or in general $p\mu$ for any positive $p\in \R$ will also be Haar measures. Luckily, this is the only way to produce additional Haar measures, as they are unique up to a constant.

\begin{lemma}[Haar Measures for Locally Compact Groups, See \cite{stroppel}, Theorems 12.20, 12.23 and 14.3]\label{resultofexistence} Let $G$ be a locally compact group.
 Then there exists a map $\lambda:C_c(G,\K)\rightarrow\K$ which is a Haar measure. Moreover, fix $\eta\in C_c^+(G,\K)$ where $\eta\neq0$, and
 suppose $\mu:C_c(G,\K)\rightarrow\K$ is another Haar measure. Then, for all $\varphi\in C_c(G,\K)$ we have
 $$ \frac{\lambda(\varphi)}{\lambda(\eta)}=\frac{\mu(\varphi)}{\mu(\eta)}.
$$ Therefore there exists an $r \in ]0,+\infty[$ such that $\lambda=r\mu$.
\end{lemma}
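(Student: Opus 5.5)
Existence and uniqueness are two separate tasks, and I would handle them in that order. For existence I would use the classical Weil--Cartan covering-number construction. Work first with real-valued functions: it suffices to build a positive, left-invariant, nonzero linear functional on $C_c(G,\R)$ and then extend it to $\C$ by $\lambda(\varphi)=\lambda(\mathrm{Re}\,\varphi)+i\lambda(\mathrm{Im}\,\varphi)$.

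For $f,\varphi\in C_c^+(G,\K)$ with $\varphi\neq 0$, let $(f:\varphi)$ be the infimum of $\sum_i c_i$ over all finite families with $c_i\ge 0$, $g_i\in G$ and
$$f\le \sum_i c_i\, \varphi(g_i^{-1}\,\cdot).$$
This is finite because $\supp(f)$ is compact and can be covered by finitely many translates of the open set $\{\varphi>\|\varphi\|_\infty/2\}$. I would then verify the standard properties:
\begin{itemize}
\item left invariance of $(f:\varphi)$;
\item subadditivity and positive homogeneity in $f$;
\item the bounds $(f:\eta)^{-1}\le (f:\varphi)/(\eta:\varphi)\le (\eta:f)$.
\end{itemize}
Define $I_\varphi(f)=(f:\varphi)/(\eta:\varphi)$. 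By the bounds above, $I_\varphi(f)$ lies in the compact interval $[(\eta:f)^{-1},(f:\eta)]$. Tychonoff's theorem then gives a cluster point $\lambda$ of the net $(I_\varphi)$, where $\varphi$ runs over functions whose supports shrink to $\{e\}$ (directed by the neighbourhood filter). Invariance, homogeneity and subadditivity pass to the limit, and $\lambda(\eta)=1$ gives nontriviality.

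The main obstacle is \emph{additivity} of the limit. Here I would use uniform continuity of compactly supported functions (left uniform structure) to show: given $f_1,f_2$ and $\varepsilon>0$, there is a neighbourhood $U$ of $e$ such that $I_\varphi(f_1)+I_\varphi(f_2)\le I_\varphi(f_1+f_2)+\varepsilon$ whenever $\supp\varphi\subseteq U$. The argument is the usual partition trick. Put $h=f_1+f_2+\delta\, \eta'$, where $\eta'\in C_c^+$ equals $1$ on $\supp(f_1+f_2)$, and set $h_j=f_j/h$. Each $h_j$ is uniformly continuous, so a dominating sum for $h$ splits into dominating sums for the $f_j$ up to a small error.

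For uniqueness I would use Fubini on $G\times G$ for continuous compactly supported functions, which is legitimate since only iterated Riemann-type integrals of continuous compactly supported functions occur. Fix $\varphi\in C_c(G,\K)$. For $g\in C_c^+$ symmetric-type test functions $g$ concentrating at $e$, compute $\lambda(\varphi)\mu(g)$ and $\mu(\eta)$-analogues by substituting $x\mapsto yx$ and exploiting left invariance of both functionals. This yields an identity of the form
$$\frac{\lambda(\varphi)}{\lambda(\eta)}=\frac{\mu(\varphi)}{\mu(\eta)}+o(1)$$
as $\supp g\to\{e\}$, where the error is controlled by uniform continuity of $\varphi$ and $\eta$. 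This is the standard von Neumann/Weil argument, and it needs some care with the modular function, which I would avoid by using only left translations and inversion of the test function $g$. Letting $\supp g$ shrink gives the stated equality of ratios. Finally, set $r=\lambda(\eta)/\mu(\eta)$. It lies in $]0,+\infty[$ because a nonzero positive invariant functional is strictly positive on every nonzero $\eta\in C_c^+$: finitely many translates of $\eta$ dominate any given positive compactly supported function. This gives $\lambda=r\mu$.
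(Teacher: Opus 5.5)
The paper gives no proof of its own for this lemma; it only cites Stroppel. Your sketch is the classical textbook proof of that cited result: Weil's covering numbers with a Tychonoff compactness argument and the uniform-continuity additivity lemma for existence, then a Fubini comparison against symmetric test functions shrinking to $e$, plus strict positivity, for the ratio identity and $r\in\,]0,+\infty[$. It is correct. The one slip is in your third bullet, where $f$ and $\eta$ are interchanged: the correct bounds are $(\eta:f)^{-1}\le (f:\varphi)/(\eta:\varphi)\le (f:\eta)$, which is the interval you actually use in the next sentence.
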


This uniqueness (up to the positive constant $r$) allows us to introduce the following notation.

\begin{definition}[Haar Measure Notation, See \cite{stroppel} Theorem 12.24] Let $G$ be a locally compact group.  Suppose $\lambda:C_c(G,\K)\rightarrow\R$ is a Haar Measure and $\varphi\in C_c(G,\K)$.  We write
 \[
 \lambda(\varphi)= \int_G \varphi \text{ }d\lambda=\int \varphi \text{ }d\lambda
 \]
\end{definition}

When  $G$ is a compact group, we may ``normalize'' the Haar measure by taking any Haar Measure $\lambda$ and dividing it by its measure of the unit function $\textbf{1}:G\rightarrow\K$ given by $\textbf{1}(x)=1$ for all $x \in G$. 

\begin{definition}[Normalized Haar Measure, See \cite{hofmann}, Definition 2.6]\label{defnormhaarmeasure}Let $G$ be a compact group and $\mu:\cgk\rightarrow\K$ a Haar measure. If $\mu(\textbf{1})=1$,  we say that $\mu$ is a \textit{normalized Haar measure}.
\end{definition}

Unfortunately the Haar measure may not be normalized in this way when we are working with a locally compact noncompact group, as the constant function \textbf{1} may fail to have compact support. 

\begin{proposition}[Existence and Uniqueness of Normalized Haar Measures, See \cite{hofmann}, Theorem 2.8] Given a compact group $G$, there exists one, and only one, normalized Haar measure on $G$.
\end{proposition}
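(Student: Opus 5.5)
Existence and uniqueness both follow from Lemma \ref{resultofexistence}, once we use that compactness of $G$ places the constant function $\textbf{1}$ in $C_c(G,\K)$.

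\emph{Step 1 (the unit function is admissible).} Since $G$ is compact, every continuous function on $G$ has compact support, because its support is a closed subset of a compact space. Hence $C_c(G,\K)=\cgk$, and in particular $\textbf{1}\in C_c^+(G,\K)$ with $\textbf{1}\neq 0$.

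\emph{Step 2 (existence).} A compact group is locally compact, so Lemma \ref{resultofexistence} gives a Haar measure $\lambda$ on $G$. We need $\lambda(\textbf{1})>0$. Positivity gives $\lambda(\textbf{1})\geq 0$, so we must rule out $\lambda(\textbf{1})=0$. Suppose $\lambda(\textbf{1})=0$ and take any $\varphi\in C_c^+(G,\K)$. Then $0\le\varphi\le\|\varphi\|_\infty\textbf{1}$, and positivity and linearity give $0\le\lambda(\varphi)\le\|\varphi\|_\infty\lambda(\textbf{1})=0$. Every $\varphi\in\cgk$ is a linear combination of nonnegative continuous functions, namely its real and imaginary parts and their positive and negative parts. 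So $\lambda$ would vanish identically, contradicting $\lambda\neq 0$. Therefore $\lambda(\textbf{1})>0$, and $\mu:=\lambda/\lambda(\textbf{1})$ is again positive, linear, invariant and nonzero, with $\mu(\textbf{1})=1$.

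\emph{Step 3 (uniqueness).} Let $\mu$ and $\mu'$ be normalized Haar measures. Apply Lemma \ref{resultofexistence} with $\eta=\textbf{1}$, which is allowed by Step 1. For every $\varphi\in\cgk$ this gives
\[
\mu(\varphi)=\frac{\mu(\varphi)}{\mu(\textbf{1})}=\frac{\mu'(\varphi)}{\mu'(\textbf{1})}=\mu'(\varphi),
\]
so $\mu=\mu'$.

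The only step needing any care is the strict positivity $\lambda(\textbf{1})>0$ in Step 2; the rest is a direct application of Lemma \ref{resultofexistence}.
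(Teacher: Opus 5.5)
Your proof is correct and matches what the paper indicates. The paper gives no argument beyond citing \cite{hofmann} and remarking just before the proposition that one normalizes an arbitrary Haar measure from Lemma~\ref{resultofexistence} by dividing by $\lambda(\textbf{1})$, with uniqueness following from the ratio formula at $\eta=\textbf{1}$. Your Step~2 supplies the check $\lambda(\textbf{1})>0$, which the paper leaves implicit.
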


Note that we may define the well known $L^2$-norm on $C_c(G,\K)$. The completion of $C_c(G,\K)$ under  the $L^2$-norm is $\lgk$ if $G$ is compact. 
\begin{remark}\label{l2definition} We recall some facts from \cite[Example 2.12]{hofmann}, \cite[Chapter 3]{rudin} and \cite[Lemma 14.6]{stroppel}.  For any locally compact group $G$, we may define the following scalar product on $\cck$ with respect to a prescribed Haar measure $\mu$
 \[
 (f_1\mid f_2):=\int_G{f_1(g)\overline{f_2(g)}}{ d \mu}.		
 \] 
 The induced norm  by this scalar product is  the $L^2$-norm, and is given explicitly (for any $f\in\cck$) by
 \[
 ||f||_2=\sqrt{\int_G{f(g)\overline{f(g)}}{ d \mu}}=\sqrt{\int_G{|f(g)|^2}{ d \mu}}.
 \]
 The completion of both $\cck$ (in the case where $G$ is locally compact) and $\cgk$ (in the case where $G$ is compact) under this norm give rise to $\lck$.
\end{remark}

We are now almost ready to state the Peter-Weyl Theorem. 

\begin{definition}[Almost Invariant Elements, See \cite{hofmann}, Definition 3.1]Let $G$ be a locally compact group, $E$ a $G$-module and $x\in E$. We say that $x$ is \textit{almost invariant} if $\mathrm{span}\{G\cdot x\}$ is a finite-dimensional vector space. \end{definition}

The almost-invariant elements of $\cgk$ will be very relevant in our discussions.

\begin{definition}[Representative Functions, See \cite{hofmann}, Definition 3.3]\label{defrepresentativefunctions}Let $G$ be a compact group. The set of almost invariant functions in $\cgk$ is denoted by $\rgk$. That is,
\[\rgk=\{f:G\rightarrow\K \mid \mathrm{span}\{G\cdot f\} \mbox{ is finite-dimensional}\}\]
This is  the \textit{space of representative functions}. An element of $\rgk$ is  a \textit{representative function}.
\end{definition}

 The classical formulation of the Peter-Weyl Theorem is the following:

\begin{lemma}[The Peter-Weyl Theorem, See \cite{hofmann}, Theorem 3.7]\label{smallpw}
Consider a compact group $G$ and the Banach algebras $\rgk$, $\cgk$ and $\lgk$ as in Remark~\ref{l2definition}. Then $R(G,\K)$ is a dense Banach subalgebra in $\cgk$ and in $\lgk$.
\end{lemma}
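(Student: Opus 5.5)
We prove the statement in four steps. First we show that $\rgk$ is a subalgebra. Next we approximate a continuous function by a finite-rank convolution. Then we show that the approximant lies in $\rgk$. Finally we pass from uniform density to $L^2$ density. Throughout, $\mu$ denotes the normalized Haar measure on the compact group $G$, and $G$ acts by right translation $_gf(x)=f(xg)$ as in Example~\ref{supnorm}.

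\textbf{Step 1: $\rgk$ is a subalgebra.} If $f_1,f_2\in\rgk$, then $\mathrm{span}\{G\cdot(f_1+f_2)\}\subseteq \mathrm{span}\{G\cdot f_1\}+\mathrm{span}\{G\cdot f_2\}$. Likewise $\mathrm{span}\{G\cdot(f_1f_2)\}$ lies in the span of products of elements of the two finite-dimensional spaces. Both are finite-dimensional, so $\rgk$ is closed under sums and products. Scalar multiples and the constant $\textbf{1}$ are clearly included.

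\textbf{Step 2: approximation by convolution.} Fix $f\in\cgk$ and $\varepsilon>0$.
\begin{itemize}
\item Since $G$ is compact, $f$ is uniformly continuous. Hence there is a symmetric identity neighbourhood $U$ with $|f(xu)-f(x)|<\varepsilon$ for all $x\in G$ and $u\in U$.
\item Choose $k\in C^+(G,\K)$ supported in $U$ with $k(u)=k(u^{-1})$ and $\int k\,d\mu=1$.
\item Put $(f*k)(x)=\int f(xy)k(y^{-1})\,d\mu(y)$. Then $\|f*k-f\|_\infty\le\varepsilon$.
\end{itemize}
The convolution operator $T\varphi=k*\varphi$ on $\lgk$ is a compact self-adjoint integral operator. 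It commutes with right translations, because it is given by left convolution.

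\textbf{Step 3: the approximant is representative.} By the spectral theorem for compact self-adjoint operators, $\lgk=\ker T\oplus\widehat\bigoplus_{\lambda\neq0}E_\lambda$, where each eigenspace $E_\lambda$ is finite-dimensional and $G$-invariant. Each $E_\lambda$ consists of continuous functions, since $\varphi=\lambda^{-1}k*\varphi$ and convolution with $k$ maps $L^2$ into $\cgk$. So every element of $E_\lambda$ is an almost invariant element, i.e.\ lies in $\rgk$.

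Write $f=f_0+\sum_\lambda f_\lambda$ with $f_0\in\ker T$ and $f_\lambda\in E_\lambda$. Then $k*f=\sum_\lambda\lambda f_\lambda$, with convergence in $L^2$. The partial sums $k*(\text{partial sums of }f)$ converge uniformly, by the estimate $\|k*\varphi\|_\infty\le\|k\|_2\|\varphi\|_2$ (Cauchy--Schwarz). Hence $k*f$ is a uniform limit of elements of $\rgk$. Together with Step 2, this shows $\rgk$ is dense in $\cgk$ for the sup norm.

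The bookkeeping in Step 3 is the main obstacle. One has to check all of the following:
\begin{itemize}
\item convolution order versus the right action, so that $T$ really commutes with translations;
\item continuity of eigenfunctions;
\item the uniform bound that upgrades $L^2$ convergence to sup-norm convergence.
\end{itemize}
In the real case $\K=\R$ I would run the spectral argument over $\C$ and take real parts, using that $k$ is real.

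\textbf{Step 4: density in $L^2$.} Because $\mu$ is a probability measure, $\|\cdot\|_2\le\|\cdot\|_\infty$. By Remark~\ref{l2definition}, $\cgk$ is dense in $\lgk$. So density of $\rgk$ in $\cgk$ transfers to density in $\lgk$.
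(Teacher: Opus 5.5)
The paper does not prove this lemma; it only cites Theorem 3.7 of its reference. Your outline is the standard proof of that theorem: a compact self-adjoint convolution operator commuting with translations, finite-dimensional invariant eigenspaces of continuous functions, and the bound $\|k*\varphi\|_\infty\le\|k\|_2\|\varphi\|_2$ to upgrade $L^2$ convergence to uniform convergence. Steps 1, 3 and 4 are sound.

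However, Steps 2 and 3 do not fit together, and the problem is exactly the left/right bookkeeping you flagged. In Step 2 you prove $\|f*k-f\|_\infty\le\varepsilon$ for $(f*k)(x)=\int f(xy)k(y^{-1})\,d\mu(y)$, using right uniform continuity $|f(xu)-f(x)|<\varepsilon$. In Step 3 you prove that $Tf=k*f$ is a uniform limit of elements of $\rgk$. With your convention, $(k*f)(x)=\int k(y)f(y^{-1}x)\,d\mu(y)$ averages \emph{left} translates of $f$. For nonabelian $G$ one has $k*f\neq f*k$ in general, even for symmetric $k$; they agree for every $f$ only when $k$ is a class function. So as written you know that $f$ is close to $f*k$ and that $k*f$ lies in the closure of $\rgk$, which does not give density.

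The repair is short. On a compact group $f$ is also left uniformly continuous, so shrink $U$ so that in addition $|f(ux)-f(x)|<\varepsilon$ for all $u\in U$ and $x\in G$. Then
\[
(k*f)(x)-f(x)=\int k(y)\bigl(f(y^{-1}x)-f(x)\bigr)\,d\mu(y),
\]
and $y^{-1}\in U^{-1}=U$ whenever $k(y)\neq0$, so $\|k*f-f\|_\infty\le\varepsilon$. Combined with Step 3, this gives uniform density.

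Replacing $T$ by $\varphi\mapsto\varphi*k$ instead would not work directly. That operator commutes with left rather than right translations, so its eigenfunctions are only left almost invariant. You would need an extra argument, e.g.\ composing with $x\mapsto x^{-1}$, to land in $\rgk$ as defined via the right action.
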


Lemma \ref{smallpw} is also known as the ``Small Peter-Weyl Theorem'', or the ``Classic Peter-Weyl Theorem'', in order to differentiate it with the ``Big Peter-Weyl Theorem'', which is an extension of the Peter-Weyl Theorem. With the "Big Peter-Weyl Theorem", one can work with a compact group $G$, but we replace the Banach algebra $\rgk$ with a more general topological vector space $E$, such as the $G$-complete locally convex $G$-modules, see \cite[Theorem 3.51]{hofmann}.

 \section{What happens to locally compact groups with compact open subgroups ?}\label{3}

The goal of the result included in this section is to extend Lemma \ref{smallpw} to locally compact groups of a certain structure. Where the Peter-Weyl theorem uses compact groups, this result uses locally compact groups with  compact open subgroups (for instance, the $p$-adic rationals $\mathbb{Q}_p$ which have the $p$-adic integers $\mathbb{Z}_p$  as a compact open subgroup). While the Peter-Weyl Theorem is able to approximate all the elements of $\lgk$ (when $G$ is compact) with respect to the $L^2$-norm (defined via a normalised Haar measure), we are going to approximate all the elements  of $\lck$ under the $L^2$-norm when $G$ will be locally compact. Finally, where the Peter-Weyl Theorem is able to obtain its approximations using representative functions, we will approximate its functions using the so-called ``Lifted representative functions'' (see later on). To produce a lifted representative function we first need a locally compact group $G$ with an open compact subgroup $H$. We then take a representative function on the compact open subgroup $H$ and "lift" it to be a function on the including group $G$. The lifted function will take its original values on the subgroup $H$ and have the value 0 outside of it. Shifts and linear combinations of such functions are also considered to be lifted representative functions. Before we define the lifted representative functions it is useful to define the "lifting operator" which is an operator that takes a function defined on the subgroup $H$ and lifts it to a function on the $G$ which takes the value zero outside of $H$. The lifting process is demonstrated in Fig. \ref{SingleLiftedRep}.

\begin{definition}[Lifting Operator]\label{liftingop} Let $G$ be   a locally compact group with  $H$ a compact open subgroup of $G$. Consider a continuous function $k : x\in H \mapsto k(x) \in \K$.  We define the \textit{lifting} of $k$ to $G$ as the function $\ell_G(k):G\rightarrow\K$ given by
 \[
 \ell_G(k)(x)=\begin{cases}
 k(x) & \mbox{if} \ x\in H\\
 \\
 0 & \mbox{if} \  x\in G\setminus H.
 \end{cases}
\]
\end{definition}

\begin{figure}[!htbp]
 \centering
 \includegraphics[width=0.69\linewidth]{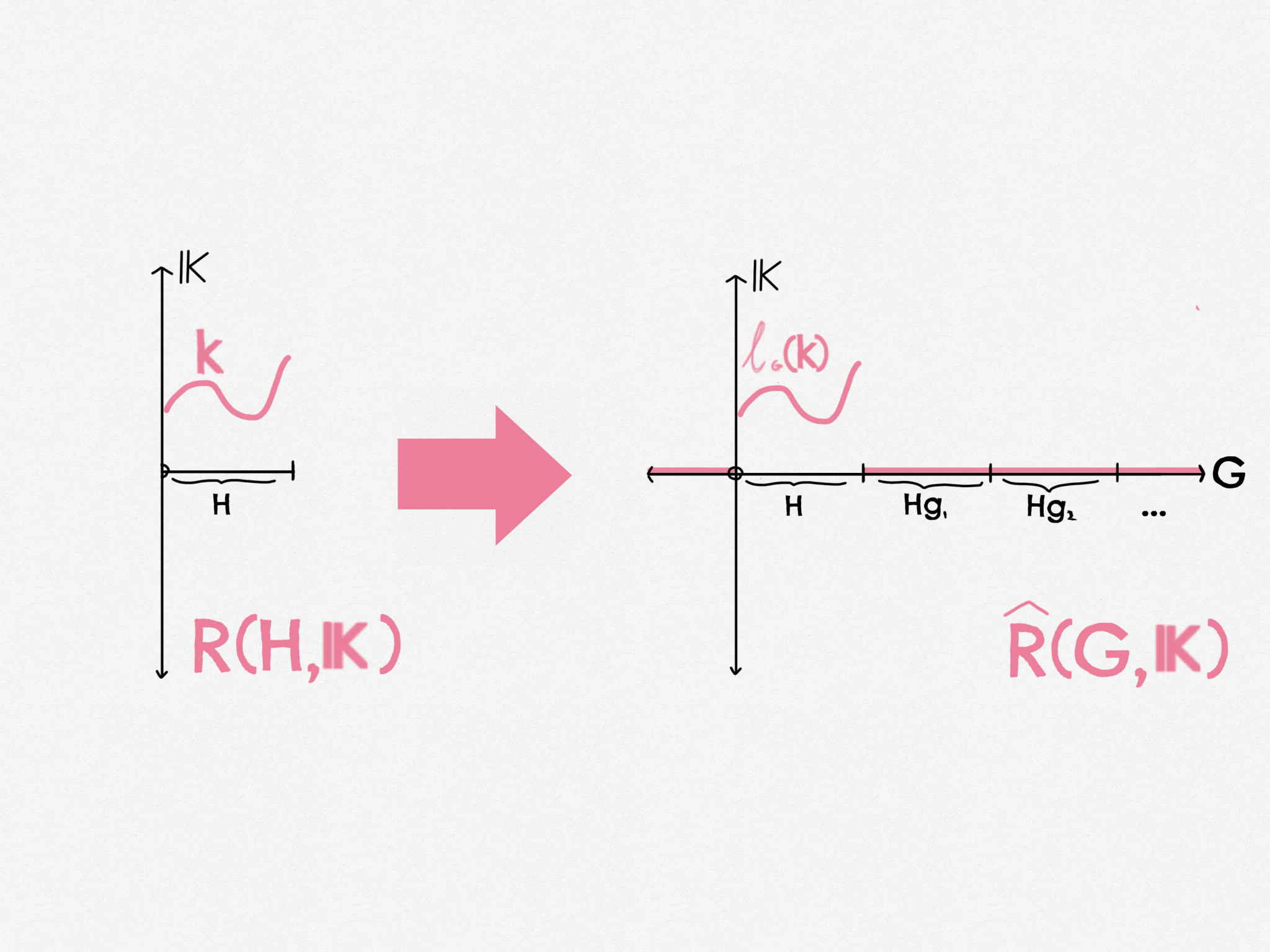}
 \caption{Demonstration of how a representative function may be lifted. Note that $G$ and $\K$ have been drawn like straight lines to give intuition, but will not in general have this structure.}
 \label{SingleLiftedRep}
\end{figure}

Note that we are not requiring that $\ell_G(k)$ is  continuous in Definition \ref{liftingop}, but this follows from an application of the Remark below (because $k(x)$ continuous on $H$).

\begin{remark} \label{lemmapointwise}  From  \cite[Theorem 7.6]{willard}, we know that for a locally compact  group $G=A \cup B$ with $A$ and $B$  both open (or both closed) subsets of $G$, if $f : G \to \K$ is a function such that both $f_{|A}$ and $f_{|B}$ are continuous,  then $f$ is continuous. This means that

Definition \ref{liftingop} gives us that $\ell_G(k)$ is always continuous, as $k(x)$ is continuous on $H$. This is because we may choose $H=A$ and $G\setminus H=B$. Here the assumption of $G$ being  a topological group is important, because in a topological group every open subgroup (such as $H$) is automatically a closed subgroup. This is  generally false for arbitrary open and closed sets of an arbitrary topological space. See \cite[Proposition 1.10 iii)]{hofmann} for details. \end{remark}

Lifted representative functions are defined in the way one might expect given how we have defined the lifting operator. When a locally compact group $G$ has a compact open subgroup $H$, we may lift an element of $R(H,\K)$ to $C(G,\K)$  to obtain bigger lifted representative functions. In fact, from Definition \ref{closedsupport}, we always consider the support to be a closed subset (hence compact since we are in Hausdorff spaces) for the functions which are in $\hat{R}(H,\K)$  below. This justifies $\hat{R}(G,\K) \subseteq C_c(G,\K)$. Note also that  shifts and linear combinations are  representative functions.

\begin{definition}[Lifted Representative Functions]\label{liftedrep} If  $G$ is a locally compact group and  $H$ a compact open  subgroup of $G$,  we may define the collection of lifted representative functions to be the set
 \[
 \hat{R}(G,\K)\coloneqq \text{span}\left[\{{}_g\ell_G({k}) \mid k \in R(H,\K), \ \ g \in G \}\right],
\]
 where \[{}_g\ell_G({k})(x)\coloneqq \ell_G({k})(xg)=\begin{cases}
    k(xg) & \mbox{if} \  x\in Hg^{-1}\\
    \\
    0 & \mbox{if} \  x\in G\setminus Hg^{-1}     
 \end{cases}\] 
 denotes the $g$-shift of $\ell_G({k})$. 
\end{definition}

In Fig. \ref{CompositeLiftedRep} we can see how a lifted representative function on $G$ can be formed by combining one or more existing representative functions on $H$.
\begin{figure}[!htbp]
 \centering
 \includegraphics[width=0.79\linewidth]{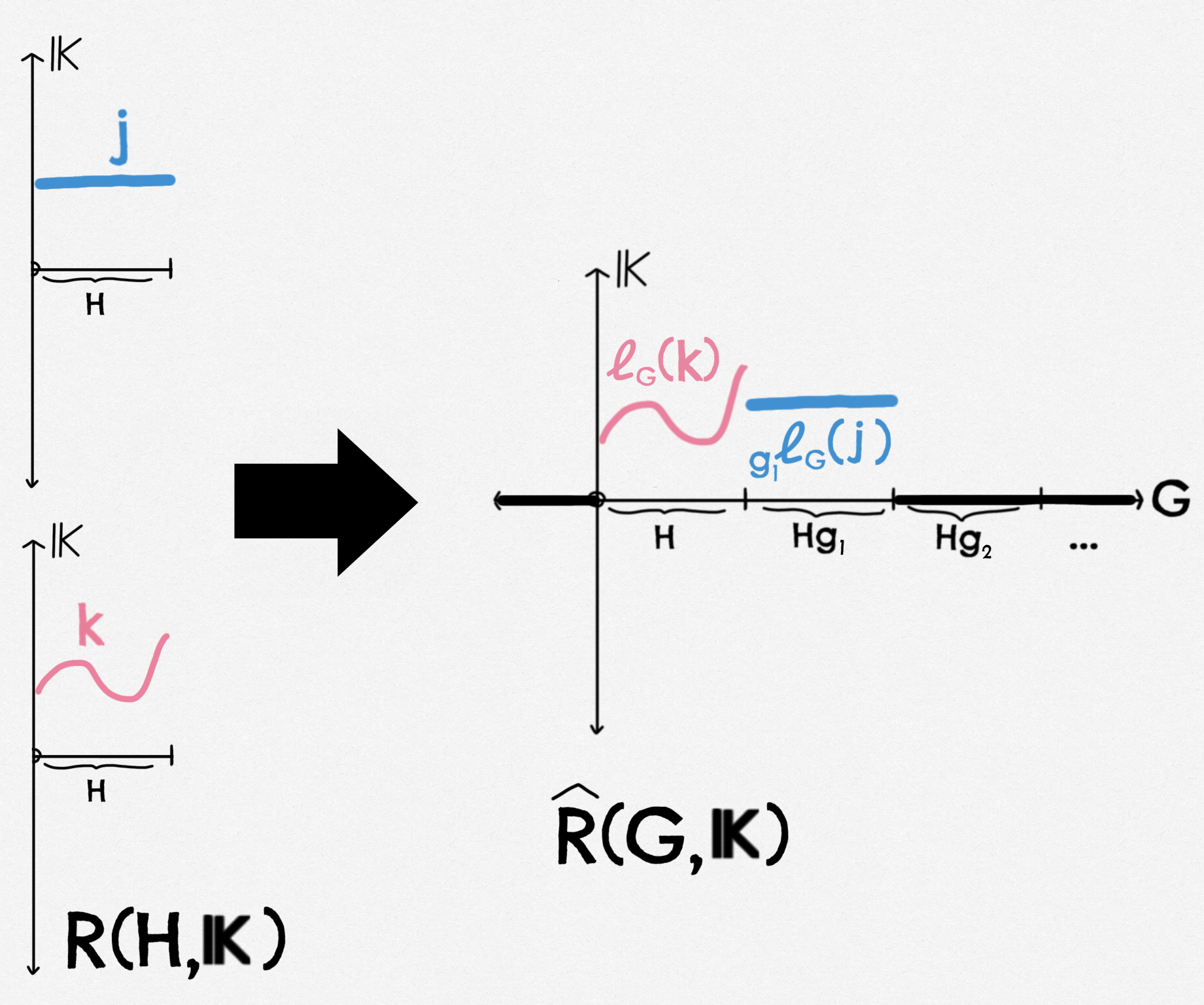}
 \caption{Demonstration of how a lifted representative function can be obtained using multiple representative functions. 
 Note that $G$ and $\K$ have been drawn like straight lines to give intuition, but will not in general have this structure.
 }
 \label{CompositeLiftedRep}
\end{figure}

\begin{remark}[Properties of the Lifting Operator]\label{liftingopproperties} Below we will mention a few key properties of the lifting operator that we will need for our deductions later on. Note we show many of these facts only for the $x\in H$ case as the $x\notin H$ case is trivial.
 \begin{enumerate}
 \item\textbf{The Lifting Operator is Linear:}
For all $k_1,k_2\in R(H,\K)$ and $a,b\in\K$ and for all $x\in H$
\[  \ell_G(ak_1+bk_2)(x)=a\ell_G(k_1)(x)+b\ell_G(k_2)(x). \]
 \item\textbf{The Lifting Operator Preserves Positivity:}
 Let $k\in R(H,\K)$ where $k  >0$ for all $x \in H$.  Then
\[ \ell_G(k)(x)=  k(x) \ge 0. \]
 \item\textbf{The Lifting Operator Preserves Shifts:}
 If $k \in R(H,\K)$ and $h\in H$, then for all $x\in H$
\[ \ell_G({}_h k)(x)= \ell_G(k)(xh)=  {}_h \ell_G(k)(x),\]
 therefore the shifting action of $H$ is preserved. %\textcolor{red}{(Questions over this one.)}. 
 \item\textbf{The Lifting Operator Preserves Multiplication:}
 If $k_1,k_2\in R(H,\K)$, then for all $x\in H$ we have
\[ \ell_G(k_1k_2)(x)=\ell_G(k_1)(x)\ell_G(k_2)(x). \] %\textcolor{red}{(Questions over this one.)}
 \item\textbf{The Lifting Operator Preserves the Norm of $\K$:}
 If $k\in R(H,\K)$, then for all $x\in H$
\[ \ell_G(|k|)(x) =|\ell_G(k)(x)|.\]
 \end{enumerate}
\end{remark}

An important fact that we will require for our result later is that when we lift a function from $H$ to $G$, the $L^2$-norm of the original function with respect to the Haar measure on $H$ is the same as the $L^2$-norm of the lifted function, provided we use a Haar measure on $G$ that is in a certain sense normalized over $H$.

\begin{lemma}[Preservation of Lifting $L^2$-Norms]\label{liftinglemma} Assume that $G$ is a locally compact group with  a nontrivial  compact open  subgroup $H$, and that $\int_G\cdot \;d\lambda$ is a Haar measure on $G$ satisfying (given $\chi_H:G\rightarrow\K$ characteristic function on $H$) $\int_G \chi_H  \; d\lambda=1.$ 
If  $||f||_2$ with $f\in\cck$ is the $L^2$-norm on $\lck$ and  $||k||_{2,H}$ is the $L^2$-norm on $L^2(H,\K)$ with $k\in C(H,\K)$, then  for all $j\in R(H,\K)$ 
 \[
 ||\ell_G(j)||_2=||j||_{2,H}.
 \]
\end{lemma}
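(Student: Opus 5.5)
The key step is to show that restricting $\lambda$ to $H$ gives the normalized Haar measure on $H$; once that is known, the identity follows by computing the integral of $|\ell_G(j)|^2$, which vanishes off $H$. Here $||\cdot||_{2,H}$ is taken with respect to the normalized Haar measure $\mu_H$ on the compact group $H$ (Definition \ref{defnormhaarmeasure}), as in the classical Peter-Weyl setting.

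First, by Remark \ref{lemmapointwise}, $\chi_H$ and $\ell_G(j)$ are continuous on $G$, because $H$ is open and closed. Their supports lie in the compact set $H$, so $\chi_H \in \cck$ and $\ell_G(j)\in\cck$, and all the integrals below are defined. Define $\nu: C(H,\K)\to\K$ by
\[
\nu(k) := \int_G \ell_G(k)\, d\lambda .
\]
This makes sense because $\ell_G(k)\in\cck$ for every $k\in C(H,\K)$. By Remark \ref{liftingopproperties}, together with the same properties outside $H$, where every function involved is $0$:
\begin{enumerate}
\item[(a)] $\nu$ is linear, because $\ell_G$ is linear and $\lambda$ is linear.
\item[(b)] $\nu$ is positive, because $\ell_G$ preserves positivity and $\lambda$ is positive.
\item[(c)] $\nu$ is nonzero and normalized: $\ell_G(\mathbf 1_H)=\chi_H$, so $\nu(\mathbf 1_H)=\int_G\chi_H\,d\lambda=1$.
\end{enumerate}

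The main step is invariance of $\nu$ under the shift action of $H$. For $h\in H$ and all $x\in G$ we need ${}_h\ell_G(k)=\ell_G({}_hk)$. For $x\in H$ this is property (3) of Remark \ref{liftingopproperties}. For $x\notin H$ we have $xh\notin H$, since $H$ is a subgroup, so both sides are $0$. Then right invariance of $\lambda$ on $\cck$ gives
\[
\nu({}_hk)=\int_G {}_h\ell_G(k)\,d\lambda=\int_G \ell_G(k)\,d\lambda=\nu(k).
\]
Hence $\nu$ is a Haar measure on $H$ with $\nu(\mathbf 1_H)=1$. The uniqueness statement for normalized Haar measures on compact groups then yields $\nu=\mu_H$. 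I expect this to be the only delicate point. One must check that the invariance convention of $\lambda$ (right shifts, matching the action ${}_gf$) is the one used for $\mu_H$. If the source uses left invariance instead, the same argument runs with left shifts, and on the compact group $H$ left and right Haar measures agree anyway since compact groups are unimodular.

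Finally, take $j\in R(H,\K)\subseteq C(H,\K)$. By property (5), $|\ell_G(j)|^2=\ell_G(|j|^2)$ pointwise on $G$: on $H$ this is that property, and off $H$ both sides are $0$. Since $|j|^2\in C(H,\K)$, we obtain
\[
||\ell_G(j)||_2^2=\int_G \ell_G(|j|^2)\,d\lambda=\nu(|j|^2)=\int_H |j|^2\,d\mu_H=||j||_{2,H}^2 .
\]
Taking square roots gives the claim. Note that the argument uses only continuity of $j$, so the identity actually holds for every $j\in C(H,\K)$.
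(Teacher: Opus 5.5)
Your proof is correct and follows essentially the same route as the paper. Both arguments define the functional $k\mapsto\int_G \ell_G(k)\,d\lambda$ on $C(H,\K)$, check linearity, positivity, shift-invariance and normalization via $\ell_G(\mathbf{1})=\chi_H$, identify it with the normalized Haar measure on $H$ by uniqueness, and then compute $\|\ell_G(j)\|_2^2=\int_G \ell_G(|j|^2)\,d\lambda$. Your additional checks fill in details the paper leaves implicit: the shift identity and $|\ell_G(j)|^2=\ell_G(|j|^2)$ off $H$, the compact support of $\ell_G(k)$, and the left/right invariance convention, which is harmless because the modular function of $G$ is trivial on the compact subgroup $H$.
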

\begin{proof}

 First, we claim that the function $\rho:C(H,\K)\rightarrow\K$ given by $\rho (j)\coloneqq \int \ell_G({j})\;d\lambda$ is the normalized Haar measure on $H$. We prove it, checking the definitions and utilizing Remark \ref{liftingopproperties}.  Fix $j,j_1,j_2\in R(H,\K)$ for the below:
 \begin{enumerate}
 \item \textbf{Linearity:} Note for any $a,b\in\K$ we have $\rho(aj_1+bj_2)=\int \ell_G{(aj_1+bj_2)}\;d\lambda=\int a\ell_G{(j_1)}+b\ell_G{(j_2)}\;d\lambda=a\int \ell_G{(j_1)}\;d\lambda+b\int\ell_G{(j_2)}\;d\lambda=a\rho(j_1)+b\rho(j_2)$.\n
 This reasoning uses property (1) of Remark \ref{liftingopproperties}, which follows from the deduction that for all $x\in G$ and $k_1,k_2\in R(H,\K)$ we have 
\[\begin{split}  
 \ell_G(ak_1+bk_2)(x)=&\begin{cases}
                        ak_1(x)+bk_2(x) & \mbox{if} \ x\in H\\
                        0& \mbox{if} \  x\in G\setminus H
                        \end{cases}\\
                        =&a\begin{cases}
                        k_1(x)& \mbox{if} \  x\in H\\
                        0&x\in G\setminus H
                        \end{cases}+b\begin{cases}
                        k_2(x) & \mbox{if} \  x\in H\\
                        0& \mbox{if} \ x\in G\setminus H
                        \end{cases}\\
                    &=a\ell_G(k_1)(x)+b\ell_G(k_2)(x). 
 \end{split}    
\] 
 The properties below of positivity, left invariance, and normalization also rely on the assertions of Remark \ref{liftingopproperties} which may  be  proved in a similar way.
 \item \textbf{Positivity:} Note for $j>0$ we have $\rho(j)=\int \ell_G({j})\;d\lambda$. This is positive as $\ell_G({j})>0$ (Remark \ref{liftingopproperties} property (2)) and Haar integrals are positive.
 
 \item \textbf{Left Invariance:} Note that for $h\in H$ we have $\rho({}_hj)=\int \ell_G{({}_hj)}\;d\lambda=\int {}_h\ell_G({j})\;d\lambda=\int \ell_G({j})\;d\lambda$ (Remark \ref{liftingopproperties} property (3))
 
 \item \textbf{Normalization:} Note that (given \textbf{1}$:H\rightarrow\K$ the constant function on $H$ that takes the value 1 everywhere) $\rho($\textbf{1}$)=\int \ell_G($\textbf{1}$)\;d\lambda=\int \chi_H\;d\lambda=1$
 
 \end{enumerate}
 Next, to show that $||\ell_G({j})||_2=||j||_{2,H}$ we note that (from Remark \ref{liftingopproperties} (4-5))
\[||\ell_G({j})||_2=\sqrt{\int \ell_G({j})\overline{\ell_G({j})}d\lambda}
 =\sqrt{\int |\ell_G({j})|^2d\lambda}
 =\sqrt{\int \ell_G(|j|^2)d\lambda}=\sqrt{\rho(|j|^2)} =\sqrt{\rho(j\;\overline{j})} =||j||_{2,H}.\]
\end{proof}

The main result is that that we can use $\rhgk$ to approximate elements in $\lck$.

\begin{theorem}[Approximations on $\lck$]\label{approxl2} If  $G$ is   a locally compact group with  a  nontrivial compact open subgroup $H$, then $\rhgk$ is a dense subspace of $\lck$.
\end{theorem}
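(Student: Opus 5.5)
Since $\rhgk$ is a linear span, it is a subspace; it sits inside $\cck\subseteq\lck$ because each lifted function is continuous with compact support, as noted before Definition \ref{liftedrep}. Density will be proved in three reductions. First, by Remark \ref{l2definition}, $\cck$ is dense in $\lck$, so it suffices, given $f\in\cck$ and $\varepsilon>0$, to find $F\in\rhgk$ with $\|f-F\|_2<\varepsilon$. We normalize the Haar measure $\lambda$ on $G$ so that $\int_G\chi_H\,d\lambda=1$. This is possible because $H$ is open and compact, so $\chi_H\in C_c^+(G,\K)$ is nonzero (Remark \ref{lemmapointwise}) and has positive integral. Density in $L^2$ does not depend on the choice of Haar measure, since the resulting norms are proportional (Lemma \ref{resultofexistence}).

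Second, we localize. The right cosets $Hg^{-1}$, $g\in G$, are open, pairwise disjoint, and cover $G$. Since $K=\supp(f)$ is compact, finitely many of them, say $Hg_1^{-1},\dots,Hg_n^{-1}$, cover $K$. So $f=\sum_{i=1}^n f\chi_{Hg_i^{-1}}$, and each summand is continuous by Remark \ref{lemmapointwise}. For each $i$, define $k_i\in C(H,\K)$ by $k_i(y)=f(yg_i^{-1})$. Then a direct check against Definition \ref{liftedrep} gives
\[
{}_{g_i}\ell_G(k_i)(x)=\ell_G(k_i)(xg_i)=f(x)\chi_{Hg_i^{-1}}(x)\quad\text{for all } x\in G .
\]
Hence $f=\sum_i {}_{g_i}\ell_G(k_i)$.

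Third, we approximate on each coset. By Lemma \ref{smallpw} applied to the compact group $H$, choose $j_i\in R(H,\K)$ with $\|k_i-j_i\|_{2,H}<\varepsilon/n$. Put $F=\sum_i {}_{g_i}\ell_G(j_i)\in\rhgk$. By linearity of $\ell_G$ and of shifts,
\[
\|f-F\|_2\le\sum_{i=1}^n\big\|{}_{g_i}\ell_G(k_i-j_i)\big\|_2 .
\]
It remains to show $\|{}_{g}\ell_G(k)\|_2=\|k\|_{2,H}$ for every $k\in C(H,\K)$ and $g\in G$; then the sum is less than $\varepsilon$.

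This norm identity is the main obstacle, and it has two parts. The first part is that Lemma \ref{liftinglemma} is stated only for $j\in R(H,\K)$. Its proof, however, shows that $\rho(k)=\int\ell_G(k)\,d\lambda$ is the normalized Haar measure on $H$, and that argument works verbatim for all $k\in C(H,\K)$. So $\|\ell_G(k)\|_2=\|k\|_{2,H}$ holds for continuous $k$. The second part is that the shift ${}_g$ is a right translation, while the Haar measure is left invariant. The function $|{}_g\ell_G(k)|^2$ is the right translate of $|\ell_G(k)|^2$, so its integral equals $\Delta(g^{-1})\int|\ell_G(k)|^2\,d\lambda$, where $\Delta$ is the modular function (with a convention-dependent exponent). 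I expect to handle this by proving $\Delta\equiv1$. The image $\Delta(H)$ is a compact subgroup of the multiplicative group $]0,\infty[$, hence trivial. Moreover, $\Delta(g)$ compares $\lambda(gHg^{-1})$ with $\lambda(H)$. If this unimodularity argument does not go through directly, the fallback is to avoid it entirely: write the restriction of $f$ to the coset as a left translate instead, which is possible because left cosets $g_iH$ also partition $G$. Left translates preserve the $L^2$-norm by left invariance. Then I would verify that the span of left-shifted lifts coincides with $\rhgk$, using that for compact $H$ the space $R(H,\K)$ is invariant under both left and right translations. Either way the $L^2$-norms of the shifted lifts can be controlled, and this completes the proof.
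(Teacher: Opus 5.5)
\textbf{Verdict: same route as the paper, but your final step has a genuine gap, fixable in one line.} Your route is the paper's. You normalize $\lambda$ so that $\lambda(H)=1$, cut $f$ along finitely many cosets of $H$, move each piece onto $H$ by a right shift, approximate there by Peter--Weyl, lift back, and sum with the triangle inequality. Your coset bookkeeping ($Hg_i^{-1}$ with shifts ${}_{g_i}$) is cleaner than the paper's. You are also right that Lemma~\ref{liftinglemma} must be extended from $R(H,\K)$ to $C(H,\K)$; the paper does this tacitly.

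\textbf{The false identity.} The identity $\|{}_g\ell_G(k)\|_2=\|k\|_{2,H}$ that you set out to prove is false in general. Unimodularity does not follow from the existence of a compact open subgroup: $\Delta$ is trivial on $H$, but $\lambda(gHg^{-1})$ can differ from $\lambda(H)$. For example, take the $p$-adic affine group $G=\Qp\rtimes\Qp^\times$, with $(b,a)(b',a')=(b+ab',aa')$, and its compact open subgroup $H=\Zp\rtimes\Zp^\times$. For $g=(0,p)$ one finds $gHg^{-1}=p\Zp\rtimes\Zp^\times$, which has index $p$ in $H$. Hence $\|{}_g\ell_G(\mathbf{1})\|_2^2=\lambda(Hg^{-1})=\lambda(gHg^{-1})=1/p$, while $\|\mathbf{1}\|_{2,H}=1$.

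\textbf{The fallback.} It can be made to work, but not for the reason you give. A left coset $gH$ is a right coset of $gHg^{-1}$, not of $H$, unless $g$ normalizes $H$. So invariance of $R(H,\K)$ under translations by elements of $H$ does not put a left-shifted lift into $\rhgk$. You would have to split $gH$ along the finitely many right cosets of $H$ it meets, and check that each piece is a right-shifted lift of an element of $R(H,\K)$. That check uses three facts: $H\cap xHx^{-1}$ has finite index in $H$ for every $x\in G$; conjugation and restriction preserve representative functions; and extending a representative function by zero from a coset of a finite-index open subgroup of $H$ gives an element of $R(H,\K)$.

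\textbf{The simple fix.} Neither detour is needed. Your own computation gives $\|{}_g\ell_G(k)\|_2=c_g\|k\|_{2,H}$ with $c_g=\Delta(g^{-1})^{1/2}>0$, independent of $k$. So choose $j_i\in R(H,\K)$ with $\|k_i-j_i\|_{2,H}<\varepsilon/(n\,c_{g_i})$. Then $\|f-F\|_2<\varepsilon$ and the proof is complete.

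\textbf{The paper has the same gap.} It asserts $\|f_i-{}_{g_i^{-1}}\ell_G(k_{i,N})\|_2=\|{}_{g_i}f_i-\ell_G(k_{i,N})\|_2$. This says right translations are $L^2$-isometries, which presupposes that $G$ is unimodular. So your concern was legitimate. The same adjustment repairs the paper's argument: on the $i$-th coset, shrink the tolerance $1/N$ by a constant depending only on $g_i$.
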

\begin{proof} 
 To begin with, we let $\int\cdot \:d\mu$ be any Haar measure on $G$. This exists by Lemma \ref{resultofexistence}.  We will use this to construct an appropriate Haar measure for our scopes.  Fix a constant $a=\int \chi_H \;\;d\mu$, where $\chi_H$ is the characteristic function on $H$.  We then define a new Haar measure on $G$ given by
 \[ 
 \int\cdot \:d\lambda\coloneqq \frac{1}{a}\int\cdot \:d\mu.
 \] This Haar measure will measure the function $\chi_H$ to have the value of 1. We may think of $\lambda$ as having been normalised over $H$.
 
 \begin{figure}[!htbp]
 \centering
 \includegraphics[width=0.69 \linewidth]{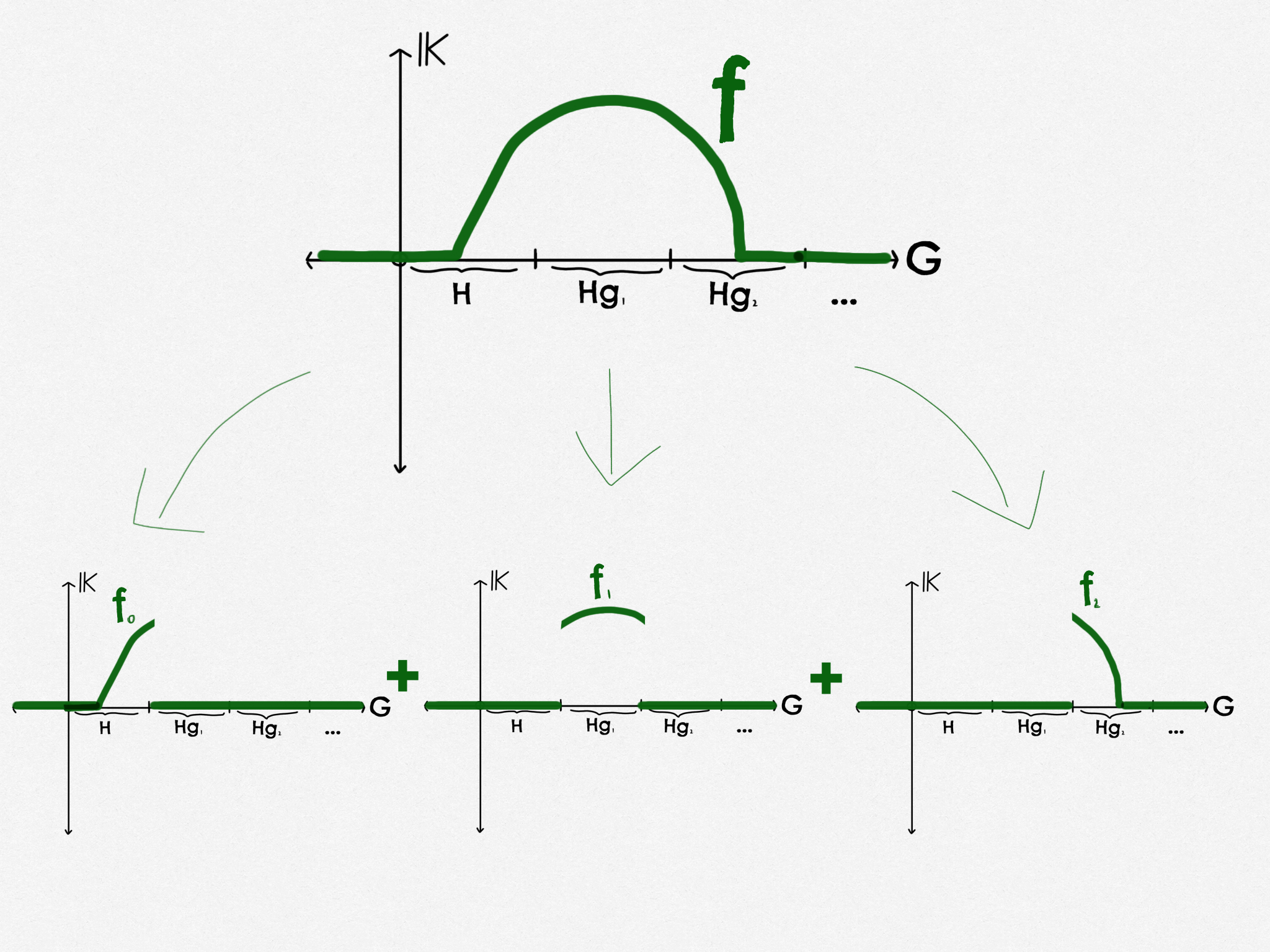}
 \caption{Demonstration of how the function is split into various sections-- Note that $G$ and $\K$ have been drawn like straight lines to give intuition, but will not in general have this structure}
 \label{SplittingF}
 \end{figure}
 
 Using this Haar measure we may construct the $L^2$-norm that we will be using for the remainder of this proof:
 \[ ||f||_2=\sqrt{\int f\bar{f}\: d\lambda}\]
 Let $f\in\cck$. We will find a sequence of functions in $\rhgk$ that approximate $f$ with respect to $||\cdot||_2$. In showing this, we will be able to conclude by the density of $\cck$ in $L^2(G,\K)$ that $\rhgk$ is dense in $L^2(G,\K)$. Let $G/H=\{Hg_i:i\in I\}$. Consider, for each $i\in I$
 \[
 f_i(x)=f(x)\chi_{Hg_i}(x)
\] Fig. \ref{SplittingF} shows how the function $f$ may be split into many of these $f_i$ functions. Since $f$ is assumed to be of compact support there will be finitely many nonzero $f_i$'s. This is because the cosets contained in $G/H$ form an open cover of the support of $f$ (in fact they form an open cover of $G$ itself) and we may therefore take a finite subcover of the support given without loss of generality by $\{Hg_1, Hg_2, ..., Hg_t\}$ ($t\in\N)$. 
 Thus $\mbox{supp}(f)\subseteq \bigcup_{i=1}^{t}Hg_i$ and  the nonzero $f_i$'s are just ${f_1, ..., f_t}$.
 Note that $$f(x)=\sum_{i=1}^t f_i(x)=\sum_{i=1}^{t}f(x)\chi_{Hg_i}(x).$$
 Each $f_i$ is continuous on $G$ as it is a product of continuous functions (with $\chi_{Hg_i}$ being continuous because $H$ is open). Our goal is to construct a sequence that will approximate each $f_i$, and then join those sequences' elements to construct a sequence that approximates $f$ itself. Choose a $f_i\in\{f_1, ..., f_t\}$. The support of this function is contained in $Hg_i$, so we may consider its entire nonzero portion (with possibly some of its zero portion) as a function on $Hg_i$. Since $Hg_i$ is topologically isomorphic to the compact group $H$, it is reasonable to assert that the function $f_i|_{Hg_i}$ may be approximated using the Peter-Weyl Theorem. However, the exact method used to do this is relatively delicate and bears explaining. The broad process is depicted in Fig. \ref{ApproximationAlgorithm}.
 \begin{figure}[!htbp]
 \centering
 \includegraphics[width=\linewidth]{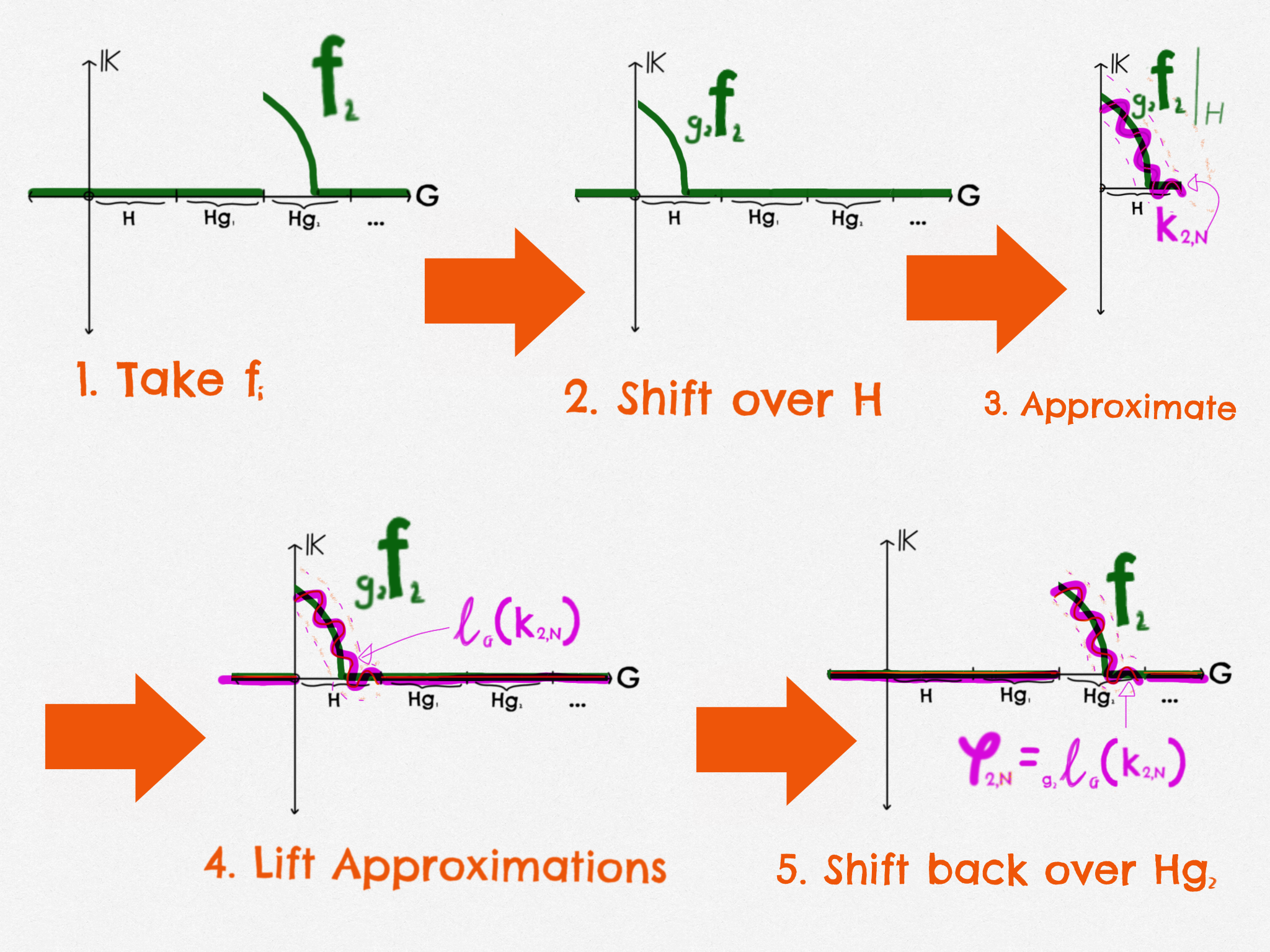}
 \caption{Illustration of the algorithms where the approximating lifted representative functions are constructed. Note that $G$ and $\K$ have been drawn like straight lines to give intuition, but will not in general have this structure.}
 \label{ApproximationAlgorithm}
 \end{figure}

 The idea is that we shift the function $f_i$ so that the portion defined on $Hg_i$ will be moved to be defined over $H$. This shift is given by ${}_{g_i}f_i$ (where as usual ${}_{g_i}f_i(x)=f_i(xg_i)$). See Step 2 of Fig. \ref{ApproximationAlgorithm} for this.  We then use the Peter-Weyl Theorem on ${}_{g_i}f_i|_{H}$ (as it is a continuous $H\rightarrow\K$ function) and obtain a sequence of functions $(k_{i,N})_{N=1}^{\infty}$ in $R(H,\K)$ that approximates it. This is depicted in Step 3 of Fig. \ref{ApproximationAlgorithm}. Specifically, we insist that
 \begin{equation}\label{inequalityinH}
 ||{}_{g_i}f_i|_{H}-k_{i,N}||_{2,H}<\frac{1}{N}
 \end{equation}
 Where the norm $||\cdot||_{2,H}$ is the $L^2$-norm produced using the normalised Haar measure on $H$. We need to use this norm here and not the norm $||\cdot||_2$ we gave above as we are performing approximations on $C(H,\K)$ and as such we need to use the norm appropriate to that space. They do, however, coincide to a certain extent due to how the Haar measure was normalized in the beginning.  We will see this later. What we do next is we lift those $k_{i,N}$ terms to be functions on $G$ using the lifting operation outlined in Definition \ref{liftingop}. They will in fact be lifted representative functions as per Definition \ref{liftedrep}. Owing to the fact that the Haar measure was normalized over $H$, we  obtain
 \begin{equation}\label{Gsqueeze}
 ||{}_{g_i}f_i-\ell_G({k_{i,N}})||_{2}<\frac{1}{N}
 \end{equation}
 This equation holds  because of Lemma \ref{liftinglemma}, following from \eqref{inequalityinH} and the fact that $\ell_G{({}_{g_i}f_i|_H-k_{i,N})}={}_{g_i}f_i-\ell_G({k_{i,N}})$.  Finally, we shift those $\ell_G({k_{i,N}})$ back over $Hg_i$ so that they approximate $f_i$ as seen in Step 4 and Step 5 of Fig. \ref{ApproximationAlgorithm}. For ease of notation, these approximating functions will be named $\varphi_{i,N}\coloneqq {}_{g_i}(\ell_G({k_{i,N}}))$ and will uphold the following equation coherently with  \eqref{Gsqueeze}, namely
 \[
 ||f_i-\varphi_{i,N}||_2=||f_i-{}_{g_i^{-1}}
 \ell_G({k_{i,N}})||_2=||{}_{g_i}f_i-
 \ell_G({k_{i,N}})||_2<\frac{1}{N}.
 \]
 The idea is  to use the sequences which we have just constructed
 \[(\varphi_{1,N})_{N=1}^{\infty}  \ \mbox{to approximate} \  f_1,  \ (\varphi_{2,N})_{N=1}^{\infty} \  \mbox{to approximate} \  f_2,  ...,  \ (\varphi_{t,N})_{N=1}^{\infty} \  \mbox{to approximate} \ f_t\]
in an appropriate way.  In fact we form a single larger sequence $(\varphi_N)_{N=1}^{\infty}$ given by sum $\varphi_N=\varphi_{1,N}+\varphi_{2,N}+...+\varphi_{t,N}$ where (since the sum is finite) each term  will  appear in the lifted representative functions as indicated in Fig. \ref{CompositeLiftedRep}. This sequence $(\varphi_{N})_{N=1}^{\infty}$ approximates $f$ as motivated by the arguments below
 \[ ||f-\varphi_N||_2 =||(f_1+f_2+...+f_t)-(\varphi_{1,N}+\varphi_{2,N}+...+\varphi_{t,N})||_2\]
\[ =||(f_1-\varphi_{1,N})+(f_2-\varphi_{2,N})+...+(f_t-\varphi_{t,N})||_2 \]
\[ \leq||f_1-\varphi_{1,N}||_2+||f_2-\varphi_{2,N}||_2+...+||f_t-\varphi_{t,N}||_2 <\frac{1}{N}+\frac{1}{N}+...+\frac{1}{N}=\frac{t}{N}.\] 
 Clearly as $N\rightarrow\infty$ we have that $\frac{t}{N}\rightarrow 0$ and $0 \leq ||f-\varphi_N||_2$ so by the Squeeze Theorem we can conclude that $\lim_{N\rightarrow\infty} \varphi_{N}=f$ with respect to the $L^2$-norm, as desired. So $\rhgk$ is dense in $\cck$. Since $\cck$ is dense in $\lck$ with respect to the $L^2$-norm, we have therefore constructed a dense subgroup of $\lck$. \end{proof}

We assumed $H$ to be nontrivial in Theorem \ref{approxl2}, on the basis of the significant evidences which can be found in the example below. On the other hand, if $H$ is trivial in Theorem \ref{approxl2}, then we end up in a situation where we are approximating functions defined on a discrete topology, with those of compact  support specifically only having finite support. As these topologies are not particularly interesting, we focused on large nontrivial compact open subgroups of locally compact groups.

\begin{example}A well known noncompact locally compact abelian group  is given by $\Qp$ which  may be constructed in many ways, but the method that is most illustrative to our purposes is its construction using inverse limits and the subgroup $\Zp$, as per \cite[Exercise E1.16]{hofmann}.  Just to report briefly this well known construction, we begin with  
 $\varphi_{n} \ : \  u+p^{n+1}\mathbb{Z} \in \mathbb{Z}(p^{n+1}) \longmapsto \varphi_{n}(u+p^{n+1}\mathbb{Z})= u+p^{n}\mathbb{Z} \in \mathbb{Z}(p^{n}),$
  surjective homomorphism  of finite $p$-groups  and get the inverse limit      $\mathbb{Z}_{p}=\underset{n\in\mathbb{N}}{\underleftarrow{\lim}} \ \mathbb{Z}(p^{n}).$ Then
$\Phi_{n} \ : \  v+p^{n+1}\mathbb{Z} \in \frac{\frac{1}{p^{\infty}}\mathbb{Z}}{p^{n+1}\mathbb{Z}} \longmapsto \Phi_{n}(v+p^{n+1}\mathbb{Z})= v+p^{n}\mathbb{Z} \in  \frac{\frac{1}{p^{\infty}}\mathbb{Z}}{p^{n}\mathbb{Z}}$   allows us to form the  corresponding inverse limit and we get \[
 \mathbb{Q}_p=\varprojlim_{n \in \mathbb{N}}   \  \ \frac{\frac{1}{p^\infty} \mathbb{Z} }{ p^n  \mathbb{Z}}=    \mathbb{Z}_p \cup \frac{1}{p} \mathbb{Z}_p \cup  \frac{1}{p^2} \mathbb{Z}_p \cup \ldots \cup \frac{1}{p^n} \mathbb{Z}_p \cup  \frac{1}{p^{n+1}} \mathbb{Z}_p \cup \ldots.
\]In this situation we have
 $\frac{1}{p^{\infty}}\mathbb{Z}=\bigcup_{n \in \mathbb{N}}\frac{1}{p^{n}}\mathbb{Z}\subseteq\mathbb{Q}$ and the quotient
 $ \frac{ \frac{1}{p^\infty}\mathbb{Z}}{\mathbb{Z}}=\mathbb{Z}(p^\infty)$, getting further quotients  $(\frac{ \frac{1}{p^\infty}\mathbb{Z}}{p^{n+1} \mathbb{Z}})/({\frac{ \mathbb{Z}}{p^{n+1}\mathbb{Z}}})   \simeq \mathbb{Z}(p^\infty)$
via inclusions
$\mathrm{incl}    :  a+p^{n+1}\mathbb{Z}   \mapsto a+p^{n+1}\mathbb{Z}$ and natural projections   $\mathrm{quot}  :       b+p^{n+1}\mathbb{Z} \mapsto b+\mathbb{Z}. $ Note that  the Pr\"ufer group is a discrete locally compact abelian noncompact group (direct limit of $\Z(p^n)$)
\[\mathbb{Z}(p^\infty)= \varinjlim_{n \in \mathbb{N}}  \ \mathbb{Z}(p^n). \]
The reader may refer to \cite[Definitions 1.25, 1.27, Lemma 1.26, Exercise E1.16, Example 1.28]{hofmann} for details. Under the topology of $\Qp$ which is given by the construction above,  $\Z_p$ is a nontrivial compact  open subgroup of $\Qp$ and we may apply Theorem \ref{approxl2} with $G=\Qp$ and $H=\Zp$. The following diagram is well known and applies to the present circumstances
\[\begin{CD}
\ldots @<<< 0 @<<< 0 @<<< 0 @<<< \ldots @<<< 0\\
@.  @VVV @VVV @VVV @. @VVV\\
\ldots @<<< \frac{ \mathbb{Z}}{p^{n-1}\mathbb{Z}} @<\varphi_{n-1}<< \frac{ \mathbb{Z}}{p^n\mathbb{Z}} @<\varphi_n<< \frac{ \mathbb{Z}}{p^{n+1}\mathbb{Z}} @<<< \ldots @<<< \mathbb{Z}_p \\
@. @VV_{\mathrm{incl}}V @VV_{\mathrm{incl}}V @VV_{\mathrm{incl}}V @. @VV_{\mathrm{incl}}V\\
\ldots @<<< \frac{ \frac{1}{p^\infty}\mathbb{Z}}{p^{n-1} \mathbb{Z}} @<\Phi_{n-1}<< \frac{ \frac{1}{p^\infty}\mathbb{Z}}{p^n \mathbb{Z}} @<\Phi_n<< \frac{ \frac{1}{p^\infty}\mathbb{Z}}{p^{n+1} \mathbb{Z}} @<<< \ldots @<<< \mathbb{Q}_p\\
@. @VV_{\mathrm{quot}}V @VV_{\mathrm{quot}}V @VV_{\mathrm{quot}}V @. @VV_{\mathrm{quot}}V\\
\ldots @<<< \mathbb{Z}(p^\infty) @= \mathbb{Z}(p^\infty) @= \mathbb{Z}(p^\infty) @= \ldots @= \mathbb{Z}(p^\infty) \\
@. @VVV @VVV @VVV @. @VVV\\
\ldots @<<< 0 @<<< 0 @<<< 0 @<<< \ldots @<<< 0\\
\end{CD}
\]
    It is worth noting that Theorem \ref{approxl2}  will never be useful in the context of a  noncompact connected locally compact group, due to the fact that a noncompact connected locally compact  group is poor of compact  open subgroups. In fact  such a subgroup together with its complement would form a disjoint open cover of the full group, contradicting its connectedness. In particular, we see this looking at the closed subgroups of $\mathbb{R}^n$. From \cite[Theorem A1.12]{hofmann}, we see that every closed subgroup of $\R^n$ is of the form $$\R\cdot e_1\oplus ...\oplus\R\cdot e_p\oplus\Z\cdot e_{p+1}\oplus...\oplus\Z\cdot e_{p+q}$$ where $\oplus$ denotes a direct product of groups and $e_1,...,e_{p+q}$ are some basis vectors of $\R ^n$ with $p,q\in\N$. No subgroup of this form aside from the trivial subgroup (which is not open) may ever be compact because it is not bounded (recall compact subsets of a Hausdorff space such as $\R^n$ are exactly those sets that are closed and bounded). This is an example of a connected group that does not satisfy the premises for our result.

Finally, an intermediate situation is offered by $G=\T \times \Qp$, where $\T \times \Zp$ is a nontrivial compact open subgroup in  $G$, which is a noncompact nonconnected locally compact abelian group. Here Theorem \ref{approxl2} also applies in a significant way.
\end{example}

%\textbf{Data availability.} Data sharing is not applicable to this article as no datasets were generated or analysed during the current study.

%\medskip

%\textbf{Declarations on conflict of interest.} On behalf of all authors, the corresponding author states that there is no conflict of interest.

\end{document}